\newcommandx{\ebltodo}[2][1=]{\todo[linecolor=red,backgroundcolor=red!25,bordercolor=red,#1]{#2}}
{
  \color{olive}%
}%
{}
\DeclareMathAlphabet{\mathpzc}{OT1}{pzc}{m}{it}
\numberwithin{equation}{section}
\newtheorem{thm}[subsection]{Theorem}
\newtheorem*{cor*}{Corollary}
\newtheorem{lemma}[subsection]{Lemma}
\newtheorem{propos}[subsection]{Proposition}
\newtheorem*{thm*}{Theorem}
\newtheorem*{thma*}{Theorem A}
\newtheorem*{thmb*}{Theorem B}
\newtheorem*{thmc*}{Theorem C}
\theoremstyle{definition}
\newtheorem*{remark}{Remark}
\newcounter{consta}
\newcounter{constk}
\newcounter{constc}
\newcounter{constE}
\newcounter{constd}
\newcommand*\bigcdot{\mathpalette\bigcdot@{.5}}
\newcommand*\bigcdot@[2]{\mathbin{\vcenter{\hbox{\scalebox{#2}{$\m@th#1\bullet$}}}}}
\def\XXint#1#2#3{{\setbox0=\hbox{$#1{#2#3}{\int}$ }
\vcenter{\hbox{$#2#3$ }}\kern-.6\wd0}}
\DeclareMathOperator{\diam}{diam}
\DeclareMathOperator{\diff}{d}
\DeclareMathOperator\Mat{Mat}
\newcommand\SO{{\rm{SO}}}
\newcommand\Lie{{\rm Lie}}
\def\sl{{\mathfrak{sl}}}
\def\bbr{\mathbb{R}}
\def\bbn{\mathbb{N}}
\def\R{\bbr}
\def\N{\bbn}
\def\rfrak{\mathfrak{r}}
\def\tbf{\mathbf{t}}
\def\wbf{\mathbf{w}}
\def\vbf{\mathbf{v}}
\def\vare{\varepsilon}
\def\zg0{Z_{G_\omega}(s)}
\def\zg{Z_G(s)}
\def\be{\begin{equation}}
\def\ee{\end{equation}}
\newcommand{\rhsc}{\delta}
\newcommand {\absolute}[1] {\left| {#1} \right|}
\newcommand {\norm}[1] {\left\| {#1} \right\|}
\newcommand\eng{\mathcal E}
\newcommand\egbd{C}
\newcommand{\hide}[1]{}
\newcommand{\sqf}{Q_0}
\title{Projection Theorems and Isometries of Hyperbolic Spaces}
\author{K.\ W.\ Ohm}
\date{}
\address{Department of Mathematics, University of California, San Diego, CA 92093}
\email{kwohm@ucsd.edu}
\thanks{}
\begin{document}

\maketitle

\begin{abstract}
    We prove a restricted projection theorem for an $n-2$ dimensional family of projections from $\R^n$ to $\R$. 
    
    The family we consider arises naturally in the context of the adjoint representation of the maximal unipotent subgroup of $\SO(n-1,1)$ on the Lie algebra of $\SO(n,1)$.
\end{abstract}

\section{Introduction}\label{sec: introduction}
The classical Marstrand projection theorem states that for a compact subset $K\subset\R^n$ and for a.e.\ $\vbf\in\mathbb S^{n-1}$
\be\label{eq: Marstrand}
\dim{\rm p}_{\vbf}(K)=\min(1,\dim K),  
\ee
where ${\rm p}_\vbf(X)=X\cdot \vbf$ is the orthogonal projection in the direction of $\vbf$ and here and in what follows $\dim$ denotes the Hausdorff dimension. Analogous statements hold more generally for orthogonal projection into a.e.\ $m$-dimensional subspace, with respect to the Lebesgue measure on ${\rm Gr}(m, n)$.  

The question of obtaining similar results as in~\eqref{eq: Marstrand} where $\vbf$ is confined to a proper Borel subset $B\subset \mathbb S^{n-1}$ has also been much studied, e.g., by Mattila, Falconer, Bourgain and others. Note, however, that without further restrictions on $B$,~\eqref{eq: Marstrand} fails: e.g., if 
\[
B=\{(\cos t, \sin t, 0): 0\leq t\leq 2\pi\}
\] 
is the great circle in $\mathbb S^2$ and $K$ is the $z$-axis, then ${\rm p}_\vbf(K)=0$ for every $\vbf\in B$. 

It was conjectured by F\"assler and Orponen~\cite{FaOr} that these are essentially the only type of obstructions; more precisely, they conjecture that if $\gamma:[0,1]\to\mathbb S^2$ is a curve so that for all $\{\gamma(t),\gamma'(t),\gamma''(t)\}$ span $\R^3$ for all $t$, then for a.e.\ $t\in[0,1]$, 
\[
\dim{\rm p}_{\gamma(t)}(K)=\min(1,\dim K).
\]
This conjecture was recently proved by~\cite{PYZ}; see also the earlier work \cite{kenmki2017marstrandtype} which relies on similar techniques as~\cite{PYZ}, and the more recent work \cite{GGW} which uses different techniques --- a major difficulty here is the failure of transversality in sense of~\cite{SchPr}. 

In this paper we consider a restricted projection problem in the same vein, which is motivated by recent applications in homogeneous dynamics, see \S\ref{sec: SO(n,1)} for more details. 

Let us fix some notation in order to state the main results. 
Let $n\geq 3$. We use the following coordinates for $\R^n$
\[
\R^{n}=\{(r_1,\wbf, r_2): r_i\in \R, \wbf\in\R^{n-2}\}. 
\]
Let $L:\R^{n-2}\to\R^{n-2}$ be an isomorphism, and let $q:\R^{n-2}\to\R$ be a positive definite quadratic form. For every $\tbf\in\R^{n-2}$, define $\pi_\tbf=\pi_{L, q, \tbf}:\R^{n}\to\R$ by 
\[
\pi_{\tbf}(r_1, \wbf, r_2)= r_1 + \wbf\cdot L(\tbf) + r_2q(\tbf)
\]
where $\wbf\cdot L(\tbf)$ is the usual inner product on $\R^{n-2}$.

\medskip

In this paper, we prove the following theorem.

\begin{thm}\label{thm: main}
    Let $K\subset \R^n$ be a compact subset. Then for almost every $\tbf\in\R^{n-2}$, we have 
    \[
    \dim(\pi_\tbf(K))=\min(1,\dim K)
    \]
\end{thm}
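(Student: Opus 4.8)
The plan is to prove the theorem by induction on $n$, with the base case $n=3$ supplied by the recently established F\"assler--Orponen conjecture. I would begin by normalizing the data. Writing $q(\tbf)=\tbf^{\top}B\tbf$ with $B$ symmetric positive definite and picking $M$ with $M^{\top}M=B$, one checks that $\pi_\tbf=\widetilde\pi_{M\tbf}\circ\Phi$, where $\Phi\colon(r_1,\wbf,r_2)\mapsto(r_1,M^{-1}L^{\top}\wbf,r_2)$ is a linear automorphism of $\R^n$ and $\widetilde\pi_\tbf(r_1,\wbf,r_2)=r_1+\wbf\cdot\tbf+r_2|\tbf|^2$. Since $\Phi$ and $\tbf\mapsto M\tbf$ are linear isomorphisms (hence preserve Hausdorff dimension and Lebesgue-null sets), it suffices to prove the theorem for the normalized family $\pi_\tbf(r_1,\wbf,r_2)=r_1+\wbf\cdot\tbf+r_2|\tbf|^2$. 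For $n=3$ its direction curve is $t\mapsto(1,t,t^2)$, which together with its first and second derivatives spans $\R^3$, so this case is exactly~\cite{PYZ} (see also~\cite{GGW}). The upper bound $\dim\pi_\tbf(K)\le\min(1,\dim K)$, valid for all $\tbf$ and all $n$, is immediate since $\pi_\tbf$ is Lipschitz with one-dimensional range.

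For the inductive step, let $n\ge 4$ and assume the normalized theorem in $\R^{n-1}$. Splitting $\tbf=(t,\tbf')\in\R\times\R^{n-3}$ and $\wbf=(w_1,\wbf')\in\R\times\R^{n-3}$, the identity
\[
\pi_{(t,\tbf')}(r_1,w_1,\wbf',r_2)=\bigl(r_1+w_1t+r_2t^2\bigr)+\wbf'\cdot\tbf'+r_2|\tbf'|^2
\]
shows $\pi_{(t,\tbf')}=\pi'_{\tbf'}\circ\Psi_t$, where $\Psi_t\colon\R^n\to\R^{n-1}$ is the linear map $(r_1,w_1,\wbf',r_2)\mapsto(r_1+w_1t+r_2t^2,\wbf',r_2)$ and $\pi'$ is the normalized family on $\R^{n-1}$. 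Thus for every fixed $t$, $\pi_{(t,\tbf')}(K)=\pi'_{\tbf'}(\Psi_t(K))$ with $\Psi_t(K)\subset\R^{n-1}$ compact, and the inductive hypothesis gives $\dim\pi_{(t,\tbf')}(K)=\min(1,\dim\Psi_t(K))$ for a.e.\ $\tbf'$. Since $\Psi_t$ is linear, $\dim\Psi_t(K)\le\dim K$ always; granting (see below) that $\dim\Psi_t(K)\ge\min(1,\dim K)$ for a.e.\ $t$, we obtain $\min(1,\dim\Psi_t(K))=\min(1,\dim K)$ for a.e.\ $t$, and then a Fubini argument closes the induction: the set $\{(t,\tbf'):\dim\pi_{(t,\tbf')}(K)<\min(1,\dim K)\}$ is Borel and, for a.e.\ $t$, meets the fibre $\{t\}\times\R^{n-3}$ in a null set, so it is Lebesgue-null.

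It remains to verify that $\dim\Psi_t(K)\ge\min(1,\dim K)$ for a.e.\ $t$. Composing the target with the linear shear $(\rho,\wbf',r_2)\mapsto(\rho-r_2t^2,\wbf',r_2)$, which changes no dimensions, replaces $\Psi_t$ by the map $(r_1,w_1,\wbf',r_2)\mapsto(r_1+tw_1,\wbf',r_2)$; so it is enough to prove the classical partial-projection statement: for compact $A\subset\R^2\times\R^m$ and $P_t(\xi_1,\xi_2,\eta)=(\xi_1+t\xi_2,\eta)$, one has $\dim P_t(A)\ge\min(1,\dim A)$ for a.e.\ $t$. This follows from the standard energy method: after re-parametrizing $\{P_t\}$ by the angle $\theta$ of the line in $\R^2\times\{\mathbf{0}\}$ along which $P_t$ projects, for $z\ne 0$ the split of $z$ into its $\R^2$- and $\R^m$-components yields the uniform transversality bound $\Leb\{\theta:|P_\theta z|\le r|z|\}\lesssim\min(r,1)$, whence for an $s$-Frostman measure $\mu$ on $A$ with $s<\min(1,\dim A)$ one gets $\int I_s\big((P_\theta)_*\mu\big)\,d\theta\lesssim I_s(\mu)<\infty$ (the angular integral converges precisely because $s<1$); thus $\dim P_\theta(A)\ge s$ for a.e.\ $\theta$, and letting $s\uparrow\min(1,\dim A)$ finishes.

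From this vantage point essentially the whole difficulty is concentrated in the base case $n=3$, i.e.\ in~\cite{PYZ}, whose proof must overcome the failure of transversality in the sense of~\cite{SchPr} by a multi-scale, incidence-geometric analysis; everything layered above it is soft. The two places where some care is nonetheless needed are: making sure the normalization keeps the inner-product term $\wbf\cdot\tbf$ intact (this is exactly why one conjugates $\wbf$ by $M^{-1}L^{\top}$ rather than leaving it alone), so that the family split off in $\R^{n-1}$ is again precisely of the prescribed form and the induction is genuinely self-similar; and the measurability of $\tbf\mapsto\dim\pi_\tbf(K)$, which is what legitimizes passing from ``for a.e.\ $t$, for a.e.\ $\tbf'$'' to ``for a.e.\ $\tbf=(t,\tbf')$'' via Fubini.
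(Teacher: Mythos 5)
Your argument is correct, but it reaches \cite{PYZ} by a genuinely different route than the paper. The paper performs a single-step reduction to $\R^3$: it introduces $f_\tbf(r_1,\wbf,r_2)=(r_1,\wbf\cdot L(\tbf),r_2q(\tbf))$, proves the $(n-2)$-parameter transversality estimate of Lemma~\ref{lem: transversality of ft}, deduces by the energy method (Lemma~\ref{lem: ft dim preserving}) that $\dim f_\tbf(K)\ge\min(1,\dim K)$ for a.e.\ $\tbf$, and then uses the identity $\pi_{s\tbf}=(1,s,s^2)\cdot f_\tbf$ to apply \cite{PYZ} along each ray $s\mapsto s\tbf$, finishing with Fubini in polar coordinates on the parameter space. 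You instead induct on $n$, peeling off one Cartesian coordinate of $\tbf$ at a time through the factorization $\pi_{(t,\tbf')}=\pi'_{\tbf'}\circ\Psi_t$ and controlling the linear maps $\Psi_t$ by the classical planar Kaufman estimate, so the only transversality you ever need is the one-parameter one. Both proofs isolate the hard quadratic direction for \cite{PYZ} and treat the remaining $n-3$ directions softly; the difference is a radial versus an iterated Cartesian decomposition of the parameter space. What the paper's one-shot reduction buys is that it discretizes directly into Theorem~\ref{thm: main finitary} (a single exceptional set from the energy estimate plus one application of the finitary $\R^3$ input), whereas your induction would accumulate exceptional sets over $n-3$ stages and would be clumsier to make quantitative. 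What your route buys is elementarity of the auxiliary input, at the cost of two points you should make explicit: in the normalization, $M$ must be taken to be the \emph{symmetric} square root of $B$ so that $(M^{-1}L^{\top}\wbf)\cdot(M\tbf)=\wbf\cdot L(\tbf)$; and the Fubini step needs Borel measurability of $\tbf\mapsto\dim\pi_\tbf(K)$, which you correctly flag and which follows from measurability of Hausdorff dimension on the space of compact sets together with continuity of $\tbf\mapsto\pi_\tbf(K)$ in the Hausdorff metric.
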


Indeed for most applications a discretized version of Theorem~\ref{thm: main} is required. 
This is the content of the following theorem.

\begin{thm}\label{thm: main finitary}
    Let $0<\alpha\leq 1$, and let  $0<\delta_0\leq1$. 
Let $F\subset B_{\R^n}(0,1)$ be a finite set satisfying the following: 
\[
\#(B_{\R^n}(X, \delta)\cap F)\leq C\cdot \delta^\alpha\cdot (\# F)\quad\text{for all $X\in F$ and all $\delta\geq \delta_0$}
\]
where $C\geq 1$.

Let $0<\vare<\alpha/100$.
For every $\delta\geq \rhsc_0$, there exists a subset 
$B_{\delta}\subset B:=\{\tbf\in\R^{n-2}: 1\leq \norm{\tbf}\leq 2\}$ 
with 
\[
|B\setminus B_{\delta}|\ll \vare^{-A}\delta^{\vare}
\]
so that the following holds. 
Let ${\bf t}\in B_{\delta}$, then there exists $F_{\delta,{\bf t}}\subset F$ with 
\[
\#(F\setminus F_{\delta,{\bf t}})\ll  \vare^{-A}\delta^{\vare}\cdot (\# F)
\]
such that for all $X\in F_{\delta,{\bf t}}$, we have 
\[
\#\Bigl(\{X'\in F_{\delta, \tbf}: |\pi_{\bf t}(X')-\pi_{\bf t}(X)|\leq \delta\}\Bigr)\ll C \delta_0^{-10}\cdot \delta^{\alpha}\cdot (\# F),
\] 
where $A$ is absolute and the implied constants depend on $L$ and $q$. 
\end{thm}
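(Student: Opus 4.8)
The plan is to reduce the statement to a sublevel‑set estimate for the explicit quadratic polynomial $\tbf\mapsto\pi_\tbf(X)-\pi_\tbf(X')$, prove that estimate, run a first–moment ($L^2$) bound in the parameter $\tbf$, and then extract the ``for most $\tbf$, after discarding few points of $F$'' conclusion by two applications of Chebyshev's inequality together with a pigeonhole. First I would dispose of routine points: since $F\subset B_{\R^n}(0,1)$ and $\tbf\in B$ one has $|\pi_\tbf(X)|\ll_{L,q}1$, so for $\delta\gtrsim1$ the conclusion holds trivially with $B_\delta=B$ and $F_{\delta,\tbf}=F$; thus assume $\delta_0\le\delta\le1$. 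For $X,X'\in F$ write $Y:=X-X'=(s_1,\ubf,s_2)$, so that
\[
\pi_\tbf(X)-\pi_\tbf(X')\;=\;P_Y(\tbf)\;:=\;s_1+\ubf\cdot L(\tbf)+s_2\,q(\tbf),
\]
a polynomial of degree $\le2$ in $\tbf\in\R^{n-2}$. Setting $\rho(Y):=\max(|\ubf|,|s_2|)$, the key estimate I would establish is
\[
\bigl|\{\tbf\in B:\ |P_Y(\tbf)|\le\delta\}\bigr|\ \ll_{L,q}\ \delta/\rho(Y)\qquad\text{whenever }\rho(Y)\gg\delta\text{ and }n\ge4,
\]
while the set is empty as soon as $\rho(Y)\lesssim\delta\lesssim|s_1|$.

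To prove the key estimate, note $\nabla P_Y(\tbf)=L^{*}\ubf+2s_2A\tbf$ with $A$ the matrix of $q$, so a Markov‑type inequality for polynomials gives $\sup_B|P_Y|\gtrsim\sup_B|\nabla P_Y|\gtrsim_{L,q}\rho(Y)$. Then one splits into two cases. If $|\nabla P_Y|\gtrsim_{L,q}\rho(Y)$ throughout $B$ (e.g.\ whenever $|\ubf|$ dominates $|s_2|$, or the critical point of $P_Y$ lies far from $B$), the sublevel set is a $\ll\delta/\rho(Y)$‑neighbourhood of a bounded algebraic hypersurface and hence has measure $\ll\delta/\rho(Y)$. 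Otherwise the unique critical point $\tbf_*$ of $P_Y$ lies in (a fixed neighbourhood of) $B$; this forces $|\ubf|\lesssim_{L,q}|s_2|\asymp\rho(Y)$ and $P_Y(\tbf)=s_2\,q(\tbf-\tbf_*)+P_Y(\tbf_*)$, so the sublevel set is $\{|q(\tbf-\tbf_*)+\mathrm{const}|\le\delta/|s_2|\}\cap B$ --- a ball or a curved annular shell of controlled thickness, of measure $\ll(\delta/|s_2|)^{(n-2)/2}+\delta/|s_2|\ll\delta/\rho(Y)$ since $n-2\ge2$. (Without using the structure of $P_Y$ one would only get the exponent $\tfrac12$ here, which is exactly why $n\ge4$ matters.)

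Granting this, let $N:=\#F$, $n_\tbf(X):=\#\{X'\in F:|\pi_\tbf(X')-\pi_\tbf(X)|\le\delta\}$ and $E_\tbf:=\sum_{X\in F}n_\tbf(X)$, and bound
\[
\int_B E_\tbf\,d\tbf=\sum_{X,X'\in F}\bigl|\{\tbf\in B:|P_{X-X'}(\tbf)|\le\delta\}\bigr|
\]
by decomposing the pair sum according to $|X-X'|\sim2^{-k}$. Pairs with $|X-X'|\lesssim\delta$ number $\ll_{L,q}C\delta^{\alpha}N^{2}$ by the Frostman hypothesis (applied at scales $\ge\delta_0$) and each contributes $\ll1$; pairs with $|X-X'|\sim2^{-k}\gg\delta$ and nonempty sublevel set satisfy $\rho(X-X')\asymp2^{-k}$, number $\ll C2^{-k\alpha}N^{2}$, and contribute $\ll\delta\,2^{k}$ each. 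Since $1-\alpha\ge0$, summing the $\ll\log(1/\delta)$ resulting terms gives $\int_B E_\tbf\,d\tbf\ll_{L,q}CN^{2}\delta^{\alpha}\log(1/\delta)$ for $n\ge4$. Chebyshev in $\tbf$ then removes a set $B\setminus B_\delta$, of measure $<\delta^{\vare}$, on which $E_\tbf>\delta^{-\vare}\int_B E_{\tbf'}\,d\tbf'$; for each surviving $\tbf$, Chebyshev in $X$ removes a set $F\setminus F_{\delta,\tbf}$, of size $<\vare^{-A}\delta^{\vare}N$, on which $n_\tbf(X)>\mu$ with $\mu\asymp\vare^{A}CN\delta^{\alpha-2\vare}\log(1/\delta)$. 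Taking $A\ge1$ one has $\mu\ll_{L,q}C\delta_0^{-10}\delta^{\alpha}N$ (using $\log(1/\delta)\ll\vare^{-1}\delta^{-\vare}$, so that $\vare^{A}$ absorbs the $\vare^{-1}$, and $\delta^{-O(\vare)}\le\delta_0^{-O(1)}$ since $\delta\ge\delta_0$), which is the desired fibre bound. A possibly non‑$\delta_0$‑separated $F$ is harmless: a single $\delta_0$‑ball holds $\le C\delta_0^{\alpha}N\le C\delta_0^{-10}\delta^{\alpha}N$ points.

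The genuine obstacle is $n=3$: then $\tbf=t$ is a single real parameter and $v(t)=(1,t,q(t))=(1,t,\lambda t^{2})$ is, projectively, the moment curve, whose degenerate differences $Y$ --- vertex $t_*\in[1,2]$, near‑minimal value --- really do have sublevel sets of size $\asymp(\delta/\rho(Y))^{1/2}$, so the first‑moment bound above fails for $\alpha>\tfrac12$. This is precisely the failure of transversality in the sense of Peres--Schlag, i.e.\ the F\"assler--Orponen phenomenon for this curve. I would handle it as in the resolution of that conjecture: the projections $\pi_t$ depend on $v(t)$ only up to a bounded positive scalar and $\{v,v',v''\}$ spans $\R^{3}$, so the restricted‑projection estimate of \cite{PYZ} transfers to the $\pi_t$, and one feeds its discretized form into the Chebyshev/pigeonhole scheme above (re‑running the high--low argument of \cite{PYZ} for this explicit curve if a ready‑made discretized statement is not at hand). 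For $n\ge4$ the curvature of $q$ in $\ge2$ variables keeps every difference ``transversal enough'' and no such input is needed. I expect this $n=3$ degeneracy to be the only substantial difficulty; the remainder is dyadic bookkeeping and Chebyshev.
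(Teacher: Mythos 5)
Your proposal is correct, and for $n\geq 4$ it takes a genuinely different route from the paper --- essentially the direct Peres--Schlag-type argument that the introduction alludes to (``when $n>3$, it is possible to deduce Theorem~\ref{thm: main finitary} using techniques of~\cite{SchPr} more directly'') but does not carry out. You estimate the sublevel sets of the full quadratic polynomial $\tbf\mapsto\pi_\tbf(X)-\pi_\tbf(X')$ on the annulus $B\subset\R^{n-2}$, using the positive definiteness of $q$ in $n-2\geq2$ variables to get measure $\ll\delta/\rho$ even at the degenerate critical point (where a one-variable parameter would only give the exponent $1/2$), and then close with a first-moment bound and two Chebyshev steps; your bookkeeping of the $\delta^{-O(\vare)}\log(1/\delta)\ll\delta_0^{-10}$ losses is consistent with the generous error allowance in the statement. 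The paper instead runs a single argument uniform in $n\geq3$: it factors $\pi_{s\tbf}=(1,s,s^2)\cdot f_\tbf$ through the intermediate map $f_\tbf:\R^n\to\R^3$ of~\eqref{eq: def ft}, proves an easy transversality estimate for $f_\tbf$ (Lemma~\ref{lem: transversality of ft}, where only the linear-in-$\tbf$ term matters), upgrades it via truncated energies to the statement that $f_\tbf$ preserves $\alpha$-dimensionality at scales $\geq\delta_0$ for most $\tbf$ (Proposition~\ref{prop: ft preserves dim}), and then feeds $f_\tbf(F_\tbf)\subset\R^3$ into the discretized moment-curve projection theorem \cite[Thm.~B]{LM-PolyDensity}. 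For $n=3$ your approach and the paper's coincide in substance: both must invoke the \cite{PYZ}-type result for the nondegenerate curve $(1,s,s^2)$, and the ``ready-made discretized statement'' you hope for is exactly the cited \cite[Thm.~B]{LM-PolyDensity}. What your route buys for $n\geq4$ is self-containedness (no external black box) and a cleaner endgame (no polar-coordinates step relating the $s$-parameter back to $B_\delta\subset B$); what the paper's route buys is uniformity in $n$ and a shorter proof given the $\R^3$ input.
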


\begin{remark}
    Throughout the paper, the notation $a\ll b$ means $a\leq Db$ for a positive constant $D$ whose dependence varies and is explicated in different statements. 
\end{remark}

The most difficult case of the above theorem is arguably the case $n=3$, which was studied in~\cite{kenmki2017marstrandtype, PYZ} using fundamental works of Wolff and Schlag~\cite{Wolff, Schlag} --- see also~\cite{GGW} for a different approach to a more general problem in the same vein. 

Indeed when $n>3$, it is possible to deduce Theorem~\ref{thm: main finitary} using techniques of~\cite{SchPr} more directly. We, however, take a slightly different route which utilizes a hybrid of these two methods.  
       
\subsection*{Acknowledgment}
We would like to thank Amir Mohammadi for suggesting the problem and for helpful conversations. 


\section{Proof of Theorem~\ref{thm: main}}\label{sec: proof of main}

Theorem~\ref{thm: main} can be proved using the finitary version Theorem~\ref{thm: main finitary}. However, 
for the convenience of the reader, we present a self contained proof of Theorem~\ref{thm: main} in this brief section. This will also help explain the main idea of the proof of Theorem~\ref{thm: main finitary}. 

\medskip

Let $L$ and $q$ be as in \S\ref{sec: introduction}. 
For every $\tbf\in\R^{n-2}$, define 
\be\label{eq: def ft}
f_\tbf=f_{L,q,\tbf}:\R^n\to\R^3\quad\text{by}\quad f_\tbf(r_1,\wbf, r_2)= \Bigl(r_1, \wbf \cdot L(\tbf), r_2q(\tbf) \Bigr);
\ee
recall our coordinates $\R^{n}=\{r_1,\wbf, r_2): r_i\in\R, \wbf\in\R^{n-2}\}$.

\begin{lemma}\label{lem: transversality of ft}
Let $0<\vare<0.01$. 
Let $X, X'\in\R^n$ satisfy that $\norm{X-X'}=1$. Then   
\[
\absolute{\{\tbf\in B: \norm{f_\tbf(X)-f_\tbf(X')}\leq \vare\}}\ll \vare
\]
where $B=\{\tbf\in\R^{n-2}: 1\leq \norm{\tbf}\leq 2\}$ and the implied constant depend on $L$ and $q$.    
\end{lemma}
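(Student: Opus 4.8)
The plan is to reduce the statement to a lower bound on the gradient of the map $\tbf\mapsto f_\tbf(X)-f_\tbf(X')$ combined with a sublevel-set estimate for polynomial (here, affine and quadratic) functions. Write $X=(r_1,\wbf,r_2)$ and $X'=(r_1',\wbf',r_2')$, and set $a=r_1-r_1'$, $\vbf=\wbf-\wbf'\in\R^{n-2}$, $b=r_2-r_2'$, so that
\[
f_\tbf(X)-f_\tbf(X')=\bigl(a,\ \vbf\cdot L(\tbf),\ b\,q(\tbf)\bigr),
\]
and $a^2+\norm{\vbf}^2+b^2$ is comparable to $\norm{X-X'}^2=1$ (the middle slot of $f_\tbf$ differs from the middle slot of $X-X'$ only by the fixed isomorphism $L$, so the three quantities $|a|$, $\norm{\vbf}$, $|b|$ cannot all be small, with a threshold depending only on $L$). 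Thus the condition $\norm{f_\tbf(X)-f_\tbf(X')}\le\vare$ forces $|a|\le\vare$, $|\vbf\cdot L(\tbf)|\le\vare$, and $|b|\,q(\tbf)\le\vare$; since $q$ is positive definite and $\norm{\tbf}\ge 1$ on $B$, the last gives $|b|\ll\vare$.

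First I would split into two cases according to which of $\norm{\vbf}$ or $|b|$ is bounded below. \textbf{Case 1: $|b|\ge c_0$} for a suitable constant $c_0=c_0(q)$. Then the set in question is contained in $\{\tbf\in B: q(\tbf)\le \vare/|b|\le \vare/c_0\}$. Since $q$ is a fixed positive definite form, $\{\tbf: q(\tbf)\le s\}$ has measure $\ll s^{(n-2)/2}$, which for $n\ge 3$ is $\ll \vare^{1/2}$ — but this is weaker than the claimed $\ll\vare$ when $n=3$. So in Case 1 I would instead use the quadratic sublevel bound more carefully: on the annulus $B$ the form $q$ has $\norm{\nabla q(\tbf)}\asymp 1$, hence $\{\tbf\in B: |q(\tbf)-\text{const}|\le\eta\}$ (a thin neighborhood of a level hypersurface) has measure $\ll\eta$ in the codimension-one direction, and the remaining $n-3$ directions are confined to a bounded set, giving measure $\ll \eta = \vare/|b|\ll\vare$. \textbf{Case 2: $\norm{\vbf}\ge c_1$} (with $|b|<c_0$). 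Here I use the linear constraint: $\tbf\mapsto \vbf\cdot L(\tbf)$ is a nonzero linear functional of norm $\asymp \norm{\vbf}\ge c_1$ (since $L$ is an isomorphism, $\norm{L^{\mathrm t}\vbf}\asymp\norm{\vbf}$), so $\{\tbf\in B: |\vbf\cdot L(\tbf)|\le\vare\}$ is the intersection of $B$ with a slab of width $\ll\vare/\norm{\vbf}\ll\vare$, hence has measure $\ll\vare$.

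The remaining point is that \emph{at least one} of the two cases applies with uniform constants: if both $|b|\le c_0$ and $\norm{\vbf}\le c_1$ with $c_0,c_1$ small enough (depending on $L$), then combined with $|a|\le\vare\le 0.01$ we would get $a^2+\norm{\vbf}^2+b^2$ small, contradicting that it is comparable to $1$ — more precisely, choosing $c_0,c_1$ so that $c_0^2+\|L\|^2 c_1^2+ (0.01)^2$ is less than the lower bound for $\norm{X-X'}^2$ coming from the comparison constant for $L$. I expect the main (really, the only) obstacle to be bookkeeping the constants in this trichotomy so that the dependence is genuinely only on $L$ and $q$, and handling the $n=3$ subtlety in Case 1 by using the gradient of $q$ on the annulus rather than the crude volume of a $q$-ball; everything else is a routine slab/sublevel-set volume estimate.
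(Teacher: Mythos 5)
Your proof is correct and is essentially the paper's argument: both pin down $|r_1-r_1'|\le\vare$ and $|r_2-r_2'|\ll\vare$ (using that $q\gg 1$ on the annulus $B$), deduce $\norm{\wbf-\wbf'}\gg 1$ from $\norm{X-X'}=1$, and then bound the set by the measure of the slab $\{\tbf\in B: |(\wbf-\wbf')\cdot L(\tbf)|\le\vare\}$, which is $\ll \vare/\norm{\wbf-\wbf'}\ll\vare$. Your Case 1 is an extra (and, given your own observation that $|r_2-r_2'|\ll_q\vare<0.01$, essentially redundant) safeguard; the paper goes directly to your Case 2.
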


\begin{proof}
    Let us write $X=(r_1,\wbf, r_2)$ and $X'=(r'_1,\wbf', r_2')$. Let us denote the set in the lemma by $S$. 
    If $S\neq \emptyset$, then there exists some $\tbf\in B$ so that 
    \[
    \norm{\Bigl(r_1-r'_1, (\wbf-\wbf')\cdot L(\tbf), (r_2-r'_2)q(\tbf)\Bigr)}\leq \vare.
    \]
    Thus $\absolute{r_1-r_1'}, \absolute{r_2-r_2'}q(\tbf)\leq \vare\leq 1/10$. This and $\norm{X-X'}=1$ imply 
    \[
    \norm{\wbf-\wbf'}\gg 1
    \]
    where the implied constant depends on $\min_{\norm{\tbf}=1} \absolute{q(\tbf)}$. 
    
    Altogether, either $S=\emptyset$ in which case the proof is complete, or we may assume $\norm{\wbf-\wbf'}\gg1$ and
    \[
    S\subset \{\tbf\in B: \norm{(\wbf-\wbf')\cdot L(\tbf)}\leq \vare\}.
    \]
    Since $\norm{\wbf-\wbf'}\gg 1$, the measure of the set on the right side of the above is $\ll\vare$, where the implied constant depends on $L$ and $q$. 
    
    The proof is complete. 
\end{proof}

\begin{lemma}\label{lem: ft dim preserving}
 Let $0<\alpha<1$. Let $\mu$ be a probability measure supported on $B_{\R^n}(0,1)$ which satisfies 
 \[
 \eng_\alpha(\mu)\leq C
 \]
 for some $C\geq 1$. 
 Then for every $R>0$, we have 
 \[
 \absolute{\{\tbf\in\R^{n-2}: 1\leq \norm{\tbf}\leq 2, \eng_\alpha(f_\tbf\mu)>R\}}\leq C'/R
 \]
 where $C'\ll \frac{C}{1-\alpha}$. 
\end{lemma}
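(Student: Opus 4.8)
The plan is to run the standard energy-and-Markov argument. Recalling that the $\alpha$-energy of a finite Borel measure $\nu$ on a Euclidean space is $\eng_\alpha(\nu)=\iint\norm{u-v}^{-\alpha}\,d\nu(u)\,d\nu(v)$, the pushforward formula gives
\[
\eng_\alpha(f_\tbf\mu)=\iint_{\R^n\times\R^n}\frac{d\mu(X)\,d\mu(X')}{\norm{f_\tbf(X)-f_\tbf(X')}^\alpha},
\]
so, writing $B=\{\tbf\in\R^{n-2}:1\le\norm{\tbf}\le2\}$ and integrating over $\tbf\in B$, Tonelli's theorem yields
\[
\int_B\eng_\alpha(f_\tbf\mu)\,d\tbf=\iint_{\R^n\times\R^n}\Bigl(\int_B\frac{d\tbf}{\norm{f_\tbf(X)-f_\tbf(X')}^\alpha}\Bigr)\,d\mu(X)\,d\mu(X').
\]
The lemma will then follow from Markov's inequality once the right-hand side is bounded by $\ll C/(1-\alpha)$.

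The heart of the matter is the pointwise-in-$(X,X')$ bound
\[
\int_B\frac{d\tbf}{\norm{f_\tbf(X)-f_\tbf(X')}^\alpha}\ll\frac{1}{1-\alpha}\cdot\frac{1}{\norm{X-X'}^\alpha}\qquad(X\neq X'),
\]
which I would prove as follows. Since $f_\tbf$ is linear in the $\R^n$ variable, $f_\tbf(X)-f_\tbf(X')=\norm{X-X'}\,f_\tbf(Z)$ with $Z:=(X-X')/\norm{X-X'}$ a unit vector, so it suffices to bound $\int_B\norm{f_\tbf(Z)}^{-\alpha}\,d\tbf$ uniformly over unit $Z$. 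Expressing this through the distribution function,
\[
\int_B\frac{d\tbf}{\norm{f_\tbf(Z)}^\alpha}=\int_0^\infty\absolute{\{\tbf\in B:\norm{f_\tbf(Z)}<\lambda^{-1/\alpha}\}}\,d\lambda,
\]
I would split at $\lambda_0:=100^{\alpha}$: for $\lambda\le\lambda_0$ the set has measure at most $\absolute{B}\ll1$, contributing $\ll\lambda_0\ll1$; for $\lambda>\lambda_0$ one has $\lambda^{-1/\alpha}<1/100<0.01$, so Lemma~\ref{lem: transversality of ft} (applied to the pair $Z,0$, for which $\norm{Z-0}=1$, noting $f_\tbf(0)=0$) bounds the measure by $\ll\lambda^{-1/\alpha}$, and since $\alpha<1$ the tail $\int_{\lambda_0}^\infty\lambda^{-1/\alpha}\,d\lambda=\tfrac{\alpha}{1-\alpha}\lambda_0^{1-1/\alpha}\le\tfrac{1}{1-\alpha}$ is finite. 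This gives the displayed bound, with implied constant depending only on $L$, $q$, and $n$.

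Feeding this into the Tonelli identity, and noting that the diagonal $\{X=X'\}$ is $(\mu\times\mu)$-null because finite $\alpha$-energy with $\alpha>0$ forces $\mu$ to be non-atomic, I get
\[
\int_B\eng_\alpha(f_\tbf\mu)\,d\tbf\ll\frac{1}{1-\alpha}\iint\frac{d\mu(X)\,d\mu(X')}{\norm{X-X'}^\alpha}=\frac{\eng_\alpha(\mu)}{1-\alpha}\le\frac{C}{1-\alpha},
\]
and Markov's inequality then gives $\absolute{\{\tbf\in B:\eng_\alpha(f_\tbf\mu)>R\}}\le R^{-1}\int_B\eng_\alpha(f_\tbf\mu)\,d\tbf\ll C/((1-\alpha)R)$, which is the claim with $C'\ll C/(1-\alpha)$. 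There is no serious obstacle here: the only points needing care are the rescaling to a unit vector $Z$ (which is precisely what prevents the transversality constant of Lemma~\ref{lem: transversality of ft} from depending on $\norm{X-X'}$) and the $\alpha$-bookkeeping near $\alpha=1$, where one must confirm that the sole source of the $(1-\alpha)^{-1}$ blow-up is the barely-convergent tail integral and not the constants coming from transversality.
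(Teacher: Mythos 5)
Your proposal is correct and follows essentially the same route as the paper: Fubini/Tonelli, rescaling by $\norm{X-X'}$ to reduce to unit separation, the transversality estimate of Lemma~\ref{lem: transversality of ft}, and Markov's inequality. The only difference is that you spell out the layer-cake computation behind the bound $\int_B\norm{f_\tbf(Z)}^{-\alpha}\,\diff\tbf\ll\frac{1}{1-\alpha}$, which the paper leaves implicit, and your bookkeeping there is sound.
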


\begin{proof}
We recall the standard argument which is based on Lemma~\ref{lem: transversality of ft}.

Put $\mu_\tbf=f_\tbf\mu$. Using the definition of 
$\alpha$-dimensional energy and the Fubini's theorem, we have 
\begin{align*}
    \int_B\eng_\alpha(\mu_\tbf)\diff\!\tbf&= \int_B\int_{\R^n}\int_{\R^n}\frac{\diff\!\mu_\tbf(X)\diff\!\mu_\tbf(X')}{\norm{f_\tbf(X)-f_\tbf(X')}^\alpha}\diff\!\tbf\\
    &=\int_{\R^n}\int_{\R^n}\int_B\frac{\diff\!\tbf}{\norm{f_\tbf(X)-f_\tbf(X')}^\alpha} \diff\!\mu_\tbf(X)\diff\!\mu_\tbf(X')
\end{align*}
Renormalizing with the factor $\norm{X-X'}^{\alpha}$, we conclude that
\be\label{eq: int energy mut}
\int_B\eng_\alpha(\mu_\tbf)\diff\!\tbf=\int_{\R^n}\int_{\R^n}\int_B\frac{\diff\!\tbf}{\norm{\frac{f_\tbf(X)-f_\tbf(X')}{\norm{X-X'}}}^\alpha}
\frac{\diff\!\mu_\tbf(X)\diff\!\mu_\tbf(X')}{\norm{X-X'}^\alpha}.
\ee
Since $\alpha<1$, applying Lemma~\ref{lem: transversality of ft}, we conclude that
\[
\int_B  \norm{\frac{f_\tbf(X)-f_\tbf(X')}{\norm{X-X'}}}^{-\alpha}\diff\!\tbf\ll \frac{1}{1-\alpha}.
\]
This,~\eqref{eq: int energy mut}, and $\eng_\alpha(\mu)\leq C$ imply that   
\[
\int_B\eng_\alpha(\mu_\tbf)\diff\!\tbf\ll\frac{\eng_\alpha(\mu)}{1-\alpha}\ll\frac{C}{1-\alpha}.
\]
The claim in the lemma follows from this and the Chebyshev's inequality. 
\end{proof}

\begin{proof}[Proof of Theorem~\ref{thm: main}]
Let $s\in\R$ and $\tbf\in\R^{n-2}$. Then 
\be\label{eq: decomp pi t}
\begin{aligned}
    \pi_{s\tbf}(r_1,\wbf, r_2)&=r_1+\wbf\cdot L(s\tbf)+r_2q(s\tbf)\\
    &=(1, s, s^2).f_\tbf(r_1,\wbf, r_2). 
\end{aligned}
\ee

Let $K\subset B_{\R^n}(0,1)$ be a compact subset, and let  $\kappa=\min(1,\dim K)$. Let $0<\alpha<\kappa$. 
By Frostman's lemma, there exists a probability measure $\mu$ supported on $K$ and satisfying the following   
\[
\mu(B(X,\delta))\leq \delta^\alpha \quad\text{for all $X\in K$}.
\]
Then by Lemma~\ref{lem: ft dim preserving}, applied with $\mu$, there exists a conull subset $\Xi_\alpha\subset\R^{n-2}$ so that  
$\dim(f_\tbf(K))\geq \alpha$ for all $\tbf \in\Xi_\alpha$. Applying this with $\alpha_n=\kappa-\frac1n$ for all $n\in\N$, we obtain a conull subset $\Xi\subset\R^{n-2}$ so that 
\[
\dim(f_\tbf(K))\geq \kappa,\quad\text{for all $\tbf \in\Xi$.}
\]

Let $\tbf\in\Xi$, and set $K_\tbf=f_\tbf(K)$. Then by~\cite[Thm.\ 1.3]{PYZ}, see also~\cite{GGW}, for a.e.\ $s\in\R$, we have 
\[
\Bigl\{x_1+x_2s+x_3s^2: (x_1, x_2, x_3)\in K_\tbf\Bigr\}\subset \R
\]
has dimension $\kappa$. This and~\eqref{eq: decomp pi t} complete the proof.  
\end{proof}


\section{Proof of Theorem~\ref{thm: main finitary}}\label{sec: proof of finitary}

We now turn to the proof of Theorem~\ref{thm: main finitary}, the argument is a discretized version of the argument in \S\ref{sec: proof of main} as we now explicate. 

Let $F\subset\R^n$ be a finite, and let  $\mu$ be the uniform measure on $F$. Our standing assumption is that for some $0< \alpha\leq 1$ and some $\egbd\geq 1$, we have  
\be\label{eq: dimension alpha}
\mu(B_{\R^n}(X, \delta))\leq C\rhsc^\alpha\qquad\text{for all $X\in F$ and all $\delta\geq \delta_0$}.
\ee

Without loss of generality, we will assume $\delta_0=2^{-k_0}$ for some $k_0\in\N$; we will also assume that $\diam(F)\leq 1$. 

For a finitely supported probability measure $\rho$ on $\R^d$, define  
\[
\eng_{\alpha,\rho}^+:\R^d\to\R\quad\text{by}\quad \hat\eng_{\alpha,\rho}(X)=\int \norm{X-X'}_+^{-\alpha}\diff\!\rho(X')
\]
where $\norm{X-X'}_+=\max\Bigl(\norm{X-X'},\delta_0\Bigr)$ for all $X,X'\in\R^d$ --- this definition is motivated by the fact that we are only concerned with scales $\geq \delta_0$, 

\medskip

We recall the following standard lemma. 

\begin{lemma}\label{lem: truncated energy}
    Let $\rho$ be a finitely supported probability measure on $\R^d$. Assume that for some $X\in \R^d$ we have 
    \[
    \hat\eng_{\alpha,\rho}(X)\leq R.
    \]
    Then for all $\delta\geq\delta_0$, we have 
    \[
    \rho(B_{\R^d}(X,\delta))\leq R\delta^{\alpha}.  
    \]
\end{lemma}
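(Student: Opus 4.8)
\textbf{Plan for the proof of Lemma~\ref{lem: truncated energy}.}
This is the standard fact that a bound on the truncated $\alpha$-energy density at a point forces a Frostman-type bound on balls centered at that point; the proof is a one-line Chebyshev/Markov argument applied to the radial weight $\norm{X-X'}_+^{-\alpha}$. The plan is as follows. Fix $X \in \R^d$ with $\hat\eng_{\alpha,\rho}(X)\leq R$ and fix a scale $\delta\geq\delta_0$. The key observation is that for any $X'$ with $\norm{X-X'}\leq\delta$ we have $\norm{X-X'}_+=\max(\norm{X-X'},\delta_0)\leq\delta$, hence $\norm{X-X'}_+^{-\alpha}\geq\delta^{-\alpha}$.

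From this pointwise lower bound on the integrand over the ball $B_{\R^d}(X,\delta)$, I would estimate
\[
\delta^{-\alpha}\,\rho\bigl(B_{\R^d}(X,\delta)\bigr)
\leq \int_{B_{\R^d}(X,\delta)} \norm{X-X'}_+^{-\alpha}\,\diff\!\rho(X')
\leq \int \norm{X-X'}_+^{-\alpha}\,\diff\!\rho(X')
= \hat\eng_{\alpha,\rho}(X)\leq R,
\]
where the second inequality just drops the restriction to the ball and uses that the integrand is nonnegative. Rearranging gives $\rho(B_{\R^d}(X,\delta))\leq R\delta^{\alpha}$, which is the claim.

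There is no real obstacle here; the only point worth a word is why the truncation $\norm{\cdot}_+$ does not interfere. For scales $\delta\geq\delta_0$ the truncation floor $\delta_0$ is harmless because it only \emph{raises} small distances up to $\delta_0\leq\delta$, so the integrand is still $\geq\delta^{-\alpha}$ throughout the ball $B_{\R^d}(X,\delta)$; and the truncation guarantees the integral $\hat\eng_{\alpha,\rho}(X)$ is finite in the first place (since $\rho$ is finitely supported this is automatic anyway, but the truncation makes the bound $R$ meaningful uniformly). Note also that the hypothesis only requires the energy bound at the single point $X$, and the conclusion is correspondingly only about balls centered at $X$, so no integration over a parameter space or Fubini is needed — this lemma is the purely pointwise counterpart of Lemma~\ref{lem: ft dim preserving}.
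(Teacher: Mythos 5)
Your proposal is correct and is essentially identical to the paper's own proof: both lower-bound the integrand by $\delta^{-\alpha}$ on the ball $B_{\R^d}(X,\delta)$ (using $\norm{X-X'}_+\leq\delta$ there since $\delta\geq\delta_0$), then drop the restriction to the ball and invoke the energy bound. No issues.
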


\begin{proof}
We include the proof for completeness. 
Let $\delta\geq \delta_0$, then  
\begin{align*}
    \delta^{-\alpha}\rho(B_{\R^d}(X,\delta))&\leq \int_{B_{\R^d}(X,\delta)}\norm{X-X'}_+^{-\alpha}\diff\!\rho(X')\\
    &\leq \int \norm{X-X'}_+^{-\alpha}\diff\!\rho(X')=\hat\eng_{\alpha,\rho}(X)\leq R,
\end{align*}
as it was claimed. 
\end{proof}

Recall our notation $B=\{\tbf\in\R^{n-2}: 1\leq \tbf\leq 2\}$. For every $\tbf\in B$, let $\mu_\tbf=f_\tbf\mu$ 
where $f_\tbf:\R^n\to\R$ is defined as in~\eqref{eq: def ft}:
\be\label{eq: def ft'}
f(r_1,\wbf, r_2)= \Bigl(r_1, \wbf \cdot L(\tbf), r_2q(\tbf) \Bigr),
\ee
and $\R^{n}=\{r_1,\wbf, r_2): r_i\in\R, \wbf\in\R^{n-2}\}$.

\begin{lemma}\label{lem: ave trun egy proj}
    For every $X\in F$,  
    \[
    \int_B\hat\eng_{\alpha,\mu_\tbf}(f_\tbf X)\diff\!\tbf\ll C\absolute{\log_2(\delta_0)}
    \]
    where the implied constant is absolute. 
\end{lemma}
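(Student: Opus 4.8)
The plan is to mimic the continuous computation in Lemma~\ref{lem: ft dim preserving}, but now tracking the truncation at scale $\delta_0$. First I would expand the left-hand side using Fubini's theorem:
\[
\int_B \hat\eng_{\alpha,\mu_\tbf}(f_\tbf X)\diff\!\tbf = \int_B \int_{\R^n} \norm{f_\tbf(X)-f_\tbf(X')}_+^{-\alpha}\diff\!\mu(X')\diff\!\tbf = \int_{\R^n}\left(\int_B \norm{f_\tbf(X)-f_\tbf(X')}_+^{-\alpha}\diff\!\tbf\right)\diff\!\mu(X'),
\]
so the task reduces to bounding the inner integral over $\tbf\in B$ for each fixed pair $X,X'\in F$ (recall $\diam(F)\le 1$, so $\norm{X-X'}\le 1$).

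Next I would estimate the single-pair integral $J(X,X'):=\int_B \norm{f_\tbf(X)-f_\tbf(X')}_+^{-\alpha}\diff\!\tbf$. Writing $d=\norm{X-X'}$, the key input is Lemma~\ref{lem: transversality of ft}, which after rescaling by $d$ gives $\absolute{\{\tbf\in B: \norm{f_\tbf(X)-f_\tbf(X')}\le \vare d\}}\ll \vare$ for $0<\vare<0.01$ (and this set is bounded by $\absolute{B}\ll 1$ for larger $\vare$). I would then estimate $J(X,X')$ by a dyadic decomposition: split $B$ according to whether $\norm{f_\tbf(X)-f_\tbf(X')}$ lies in the range $[2^{j}d, 2^{j+1}d)$, with the truncation merging together all scales below $\delta_0$. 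On the level set where $\norm{f_\tbf(X)-f_\tbf(X')}\asymp 2^j d$ (with $2^j d \ge \delta_0$) the integrand is $\asymp (2^j d)^{-\alpha}$ and the measure of that set is $\ll 2^j$ by transversality, contributing $\ll 2^{j(1-\alpha)}d^{-\alpha}$; since $\alpha\le 1$ and $2^j\lesssim d^{-1}$, each such term is $\ll d^{-\alpha}\cdot d^{-(1-\alpha)} \le$ (a bounded multiple of) ... — more carefully, summing $2^{j(1-\alpha)} d^{-\alpha}$ over $j$ from $\log_2(\delta_0/d)$ up to $0$ gives, when $\alpha<1$, a geometric sum dominated by its top term $\ll d^{-\alpha}\cdot d^{\alpha-1}\cdot$ wait — I should instead observe that the number of relevant dyadic scales is $\ll \absolute{\log_2(\delta_0/d)}\le \absolute{\log_2\delta_0}$ and on the truncated piece the integrand is $\le \delta_0^{-\alpha}$ over a set of measure $\ll 1$, while on each genuine scale $2^jd\in[\delta_0,1]$ the contribution is $\ll (2^jd)^{-\alpha}\cdot 2^j$, and summing these (a geometric-type series in $2^j$ with ratio $2^{1-\alpha}\ge 1$) is $\ll 1$ uniformly, with a possible logarithmic loss exactly when $\alpha$ is close to $1$. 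The upshot I expect is $J(X,X')\ll \absolute{\log_2\delta_0}$ with an absolute implied constant (the transversality constant depends on $L,q$ but the statement already allows that). Integrating against $\mu$, which is a probability measure supported on $F$, and using $\mu(F)=1$, then also brings in the hypothesis~\eqref{eq: dimension alpha} only implicitly through $C$; here, since $\mu$ is uniform on $F$ there is actually no extra factor of $C$ needed from the pair estimate itself, but the statement's $C$ absorbs the worst-case behavior when points of $F$ cluster below scale $\delta_0$ — I would handle that by noting $\norm{X-X'}_+\ge\delta_0$ always, so for pairs with $d<\delta_0$ the integrand is bounded by $\delta_0^{-\alpha}$ and the measure of the bad $\tbf$-set is still $\ll 1$; absorbing constants gives the bound $\ll C\absolute{\log_2\delta_0}$ after summing against $\mu$ (the $C$ entering because the density bound controls how much mass sits within $\delta_0$ of $X$).

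The main obstacle I anticipate is getting the dependence on $\alpha$ (equivalently on $1-\alpha$) right and confirming that only a single logarithmic factor $\absolute{\log_2\delta_0}$ appears rather than something like $(1-\alpha)^{-1}$ or $\absolute{\log_2\delta_0}^2$: the dyadic sum $\sum_j 2^{j(1-\alpha)}(2^jd)^{-\alpha}$ is delicate precisely in the regime $\alpha\uparrow 1$, and one has to use the truncation at $\delta_0$ to cap the number of scales at $\ll\absolute{\log_2\delta_0}$ and then bound each scale's contribution by a constant. Once that single-pair estimate $\int_B\norm{f_\tbf X - f_\tbf X'}_+^{-\alpha}\diff\!\tbf \ll \absolute{\log_2\delta_0}$ is in hand, the rest is just Fubini and the fact that $\mu$ is a probability measure, and the $C$ in the claimed bound comes along for free (indeed the estimate as I've sketched it may even give the cleaner bound $\ll\absolute{\log_2\delta_0}$, which is $\le C\absolute{\log_2\delta_0}$ since $C\ge 1$).
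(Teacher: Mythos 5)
There is a genuine gap. Your central claim --- that the single-pair integral $J(X,X'):=\int_B\norm{f_\tbf(X)-f_\tbf(X')}_+^{-\alpha}\diff\!\tbf$ is $\ll\absolute{\log_2\delta_0}$ uniformly in the pair, so that the hypothesis~\eqref{eq: dimension alpha} and the factor $C$ ``come along for free'' --- is false. Each $f_\tbf$ is linear with operator norm $\ll 1$ uniformly over $\tbf\in B$, so $\norm{f_\tbf(X)-f_\tbf(X')}\ll\norm{X-X'}$ for \emph{every} $\tbf\in B$; hence the integrand is $\gg\max(\norm{X-X'},\delta_0)^{-\alpha}$ on all of $B$ and $J(X,X')\gg\max(\norm{X-X'},\delta_0)^{-\alpha}$. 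For a pair with $\norm{X-X'}\leq\delta_0$ this is $\gg\delta_0^{-\alpha}$, which dwarfs $\absolute{\log_2\delta_0}$. (If $F$ is a single cluster of diameter $\delta_0$, the left side of the lemma is $\asymp\delta_0^{-\alpha}$, consistent with the statement only because the hypothesis then forces $C\geq\delta_0^{-\alpha}$; so the ``cleaner bound'' without $C$ that you suggest at the end is certainly unavailable.) The same missing factor appears inside your dyadic sum: the scale where $\norm{f_\tbf X-f_\tbf X'}\asymp 2^{j}\norm{X-X'}$ contributes $\ll(2^j\norm{X-X'})^{-\alpha}\cdot 2^j=2^{j(1-\alpha)}\norm{X-X'}^{-\alpha}$, which is not $\ll 1$ per scale --- you have dropped $\norm{X-X'}^{-\alpha}$. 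The lemma therefore cannot be proved pair by pair; the factor $\max(\norm{X-X'},\delta_0)^{-\alpha}$ must be cancelled by the $\mu$-mass of the set of $X'$ at that distance from $X$, and that is exactly where~\eqref{eq: dimension alpha} enters.

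The paper's proof supplies precisely this step: it decomposes the $X'$-domain (not the $\tbf$-domain) into dyadic annuli $F_k(X)=\{X'\in F:2^{-k-1}<\norm{X-X'}\leq 2^{-k}\}$ for $0\leq k<k_0$, together with $F_{k_0}(X)=F\cap B(X,\delta_0)$. On each annulus the renormalized $\tbf$-integral $\int_B\norm{(f_\tbf X-f_\tbf X')/\norm{X-X'}}^{-\alpha}\diff\!\tbf$ is $O(1)$ by Lemma~\ref{lem: transversality of ft}, and the remaining factor $\norm{X-X'}^{-\alpha}\asymp 2^{k\alpha}$ is cancelled by $\mu(F_k(X))\leq C2^{-k\alpha}$; each of the $k_0+1\asymp\absolute{\log_2\delta_0}$ annuli then contributes $\ll C$, which is the source of both the $C$ and the logarithm in the statement. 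Your Fubini step and your use of transversality are the right ingredients (and your worry about a possible $(1-\alpha)^{-1}$ loss is a fair, but secondary, point about the per-annulus constant), but the order of the decomposition matters: you must integrate over $X'$ scale by scale, invoking the non-concentration hypothesis at each scale, before any bound of size $C\absolute{\log_2\delta_0}$ can emerge.
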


\begin{proof}
    Let $X\in F$. By the definitions, we have 
    \[
    \int_B\hat\eng_{\alpha,\mu_\tbf}(f_\tbf X)\diff\!\tbf= \int_B\int \norm{f_\tbf X-f_\tbf X'}_+^{-\alpha}\diff\!\mu(X')\diff\!\tbf.
    \]
   Renormalizing with $\norm{X-X'}_+^{-\alpha}$ and using Fubini's theorem, we have 
   \[
   \int_B\hat\eng_{\alpha,\mu_\tbf}(f_\tbf X)\diff\!\tbf=\int \int_B\frac{1}{\frac{\norm{f_\tbf X-f_\tbf X'}_+^\alpha}{\norm{X-X'}_+^\alpha}}\diff\!\tbf \norm{X-X'}_+^{-\alpha}\diff\!\mu(X').
   \]
  For every $0\leq k\leq k_0-1$, let 
 \[
 F_k(X)=\{X'\in F: 2^{-k-1}< \norm{X-X'}\leq 2^{-k}\},
 \]
 and let $F_{k_0}(X)=\{X'\in F: \norm{X-X'}\leq 2^{-k_0}\}$. 
 
 For all $X'\in F_{k_0}$, we have $1\leq \frac{\norm{f_\tbf X-f_\tbf X'}_+}{\norm{X-X'}_+}\leq 10$. Thus, 
   \be\label{eq: contribution of Fk0}
   \int_{F_{k_0}}\int_B\frac{1}{\frac{\norm{f_\tbf X-f_\tbf X'}_+^\alpha}{\norm{X-X'}_+^\alpha}}\diff\!\tbf \norm{X-X'}_+^{-\alpha}\diff\!\mu(X')\ll \mu(F_{k_0})2^{k_0\alpha}\ll C. 
   \ee
   We now turn to the contribution of $F_k$ to the above integral for $k< k_0$. 
   If $X'\in F_k$, for some $k<k_0$, then $\norm{X-X'}_+=\norm{X-X'}$ and we have 
   \begin{multline}\label{eq: contribution of Fk}
   \int_{F_{k}}\int_B\frac{1}{\frac{\norm{f_\tbf X-f_\tbf X'}_+^\alpha}{\norm{X-X'}_+^\alpha}}\diff\!\tbf \norm{X-X'}_+^{-\alpha}\diff\!\mu(X')= \\
   \int_{F_{k}}\int_B\frac{1}{\frac{\norm{f_\tbf X-f_\tbf X'}_+^\alpha}{\norm{X-X'}^\alpha}}\diff\!\tbf \norm{X-X'}^{-\alpha}\diff\!\mu(X'). 
   \end{multline}
   By Lemma~\ref{lem: transversality of ft}, we have 
   \[
   \int_B\frac{1}{\frac{\norm{f_\tbf X-f_\tbf X'}_+^\alpha}{\norm{X-X'}^\alpha}}\diff\!\tbf \leq \int_B\frac{1}{\norm{\frac{f_\tbf X-f_\tbf X'}{\norm{X-X'}}}^\alpha}\diff\!\tbf\ll 1
   \]
   where the implied constant is absolute. Thus 
   \[
   \eqref{eq: contribution of Fk}\ll \mu(F_k)2^{k\alpha}\ll C
   \]
   This and~\eqref{eq: contribution of Fk0} imply that 
   \[
   \int_B\hat\eng_{\alpha,\mu_\tbf}(f_\tbf(X))\diff\!\tbf\ll Ck_0=C\absolute{\log_2(\delta_0)}
   \]
   as we claimed. 
\end{proof}

\begin{propos}\label{prop: ft preserves dim}
Let $0<\vare<1$. 
    There exists $B'\subset B$ with 
    \[
    \absolute{B\setminus B'}\leq \vare^{-A'}\delta_0^{\vare}
    \]
    so that the following holds. For every $\tbf\in B'$, there exists $F_{\tbf}\subset F$ with 
    \[
    \mu(F\setminus F_\tbf)\leq \vare^{-A'}\delta_0^{\vare}
    \]
    so that for every $X\in F_\tbf$ and every $\delta\geq \delta_0$ we have 
    \[
    \mu_\tbf\Bigl(B_{\R^3}(f_\tbf(X), \delta)\Bigr)\ll \vare^{-A'}\delta_0^{-3\vare}\cdot \delta^{\alpha}
    \]
    where $A'$ is absolute and the implied constant depends on $C$. 
\end{propos}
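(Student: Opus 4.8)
The plan is to deduce Proposition~\ref{prop: ft preserves dim} from Lemma~\ref{lem: ave trun egy proj} by a two-stage averaging (Chebyshev/Markov) argument, exactly paralleling how Lemma~\ref{lem: ft dim preserving} was derived from Lemma~\ref{lem: transversality of ft} in the continuous setting, but now keeping track of the exceptional sets quantitatively. First I would introduce the double average
\[
I:=\int_B \int_F \hat\eng_{\alpha,\mu_\tbf}(f_\tbf X)\,\diff\!\mu(X)\,\diff\!\tbf = \int_F\left(\int_B \hat\eng_{\alpha,\mu_\tbf}(f_\tbf X)\,\diff\!\tbf\right)\diff\!\mu(X),
\]
and apply Lemma~\ref{lem: ave trun egy proj} inside the parentheses to get $I \ll C|\log_2\delta_0|$. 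The logarithmic factor $|\log_2\delta_0|$ is the only mildly unpleasant feature, but it is absorbed into the $\delta_0^{\vare}$ losses since $|\log_2\delta_0|\ll_\vare \delta_0^{-\vare/2}$ (indeed $\ll_\vare \delta_0^{-\vare}$ with constant depending only on $\vare$).

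Next I would run Chebyshev twice. Set $R_0 := \vare^{-A'}\delta_0^{-2\vare}$ (or similar; the exact powers of $\vare^{-1}$ and $\delta_0$ to be tuned at the end). Let
\[
B':=\Bigl\{\tbf\in B: \int_F \hat\eng_{\alpha,\mu_\tbf}(f_\tbf X)\,\diff\!\mu(X)\le R_0\Bigr\}.
\]
By Lemma~\ref{lem: ave trun egy proj} combined with Chebyshev's inequality in $\tbf$, we get $|B\setminus B'|\ll C|\log_2\delta_0|/R_0 \le \vare^{-A'}\delta_0^{\vare}$ after choosing $A'$ large enough and using $C|\log_2\delta_0|/R_0 \ll_{\vare,C} \delta_0^{\vare}$. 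Then, for each fixed $\tbf\in B'$, we have $\int_F \hat\eng_{\alpha,\mu_\tbf}(f_\tbf X)\,\diff\!\mu(X)\le R_0$, so a second application of Chebyshev --- this time in $X$ with respect to $\mu$ --- produces
\[
F_\tbf:=\Bigl\{X\in F: \hat\eng_{\alpha,\mu_\tbf}(f_\tbf X)\le R_1\Bigr\},\qquad R_1:=R_0\,\delta_0^{-\vare},
\]
with $\mu(F\setminus F_\tbf)\le R_0/R_1 = \delta_0^{\vare}\le \vare^{-A'}\delta_0^{\vare}$. Finally, for $\tbf\in B'$ and $X\in F_\tbf$, Lemma~\ref{lem: truncated energy} applied to the measure $\rho=\mu_\tbf$ on $\R^3$ gives, for all $\delta\ge\delta_0$,
\[
\mu_\tbf\bigl(B_{\R^3}(f_\tbf X,\delta)\bigr)\le R_1\,\delta^\alpha = R_0\,\delta_0^{-\vare}\,\delta^\alpha \ll \vare^{-A'}\delta_0^{-3\vare}\,\delta^\alpha,
\]
which is the desired bound (the power $\delta_0^{-3\vare}$ leaves comfortable room for the $|\log_2\delta_0|$ factor and the tuning of constants).

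There is no serious obstacle here; the proposition is essentially a bookkeeping exercise packaging Lemma~\ref{lem: ave trun egy proj} and Lemma~\ref{lem: truncated energy} together via two Chebyshev inequalities. The one point requiring a little care is the choice of the threshold exponents: one must pick $R_0$ as a power of $\delta_0^{-\vare}$ (times a power of $\vare^{-1}$) so that \emph{both} $C|\log_2\delta_0|/R_0$ and the second-stage ratio are $\ll \vare^{-A'}\delta_0^{\vare}$, while the final energy bound $R_1\delta^\alpha$ stays within $\vare^{-A'}\delta_0^{-3\vare}\delta^\alpha$; absorbing the stray $|\log_2\delta_0|$ into $\delta_0^{-\vare}$ (at the cost of a constant depending on $\vare$, which is harmless since the statement already allows $\vare^{-A'}$ and $C$-dependence) is the only genuinely ``analytic'' step, and it is routine. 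I would also note in passing that the dependence of the implied constants on $L$ and $q$ enters only through Lemma~\ref{lem: transversality of ft} (hence through Lemma~\ref{lem: ave trun egy proj}), and on $C$ through the standing assumption~\eqref{eq: dimension alpha}.
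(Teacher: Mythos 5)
Your proposal is correct and follows essentially the same route as the paper's proof: average Lemma~\ref{lem: ave trun egy proj} over $X$ via Fubini, run Chebyshev twice (first in $\tbf$ to get $B'$, then in $X$ to get $F_\tbf$), absorb the $|\log_2\delta_0|$ factor into a power of $\delta_0^{-\vare}$, and conclude with Lemma~\ref{lem: truncated energy}. The only differences are in the cosmetic tuning of the thresholds.
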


\begin{proof}
  The proof is based on Lemma~\ref{lem: ave trun egy proj} and Chebychev's inequality as we now explicate. First note that we may assume $\delta_0$ is small enough (polynomially in $\vare$) so that 
  \[
  \delta_0^{-\vare/10}>\absolute{\log\delta_0}
  \]
  otherwise the statement follows trivially. 
  
  By Lemma~\ref{lem: ave trun egy proj}, we have 
  \be\label{eq: lem ave use}
\int_B\hat\eng_{\alpha,\mu_\tbf}(f_\tbf(X))\diff\!\tbf\leq C'\absolute{\log_2(\delta_0)}.
  \ee
  where $C'\ll C$. Averaging~\eqref{eq: lem ave use}, with respect to $\mu$, and using Fubini's theorem we have 
  \be\label{eq: ft pres dim 1}
  \int_B\int\hat\eng_{\alpha,\mu_\tbf}(f_\tbf(X))\diff\!\mu(X)\diff\!\tbf\leq C'\absolute{\log_2(\delta_0)}.
  \ee
  Let $B'=\{\tbf\in B: \int\hat\eng_{\alpha,\mu_\tbf}(f_\tbf(X))\diff\!\mu(X)<C'\delta_0^{-2\vare}\}$.
  Then by~\eqref{eq: ft pres dim 1} and Chebychev's inequality we have 
  \[
  \mu(B\setminus B')\leq \delta_0^{\vare}.
  \] 
  Let now $\tbf\in B'$, then 
  \be\label{eq: int of energy for good t}
  \int\hat\eng_{\alpha,\mu_\tbf}(f_\tbf(X))\diff\!\mu(X)\leq C'\delta_0^{-2\vare}.
  \ee
  For every $\tbf\in B'$, set 
  \[
  F_\tbf=\{X\in F: \hat\eng_{\alpha,\mu_\tbf}(f_\tbf(X))< C'\delta_0^{-3\vare}\}
  \]
  Then~\eqref{eq: int of energy for good t} and Chebychev's inequality again imply that $\mu(F\setminus F_\tbf)\leq \delta_0^{\vare}$.

  Altogether, for every $\tbf\in B'$ and $X\in F_\tbf$, we have 
  \[
  \hat\eng_{\alpha,\mu_\tbf}(f_{\tbf}(X))=\int\norm{f_\tbf(X)-f_\tbf(X')}_+^{-\alpha}\diff\!\mu_\tbf(X')\leq C'\delta_0^{-3\vare}
  \]
This and Lemma~\ref{lem: truncated energy} imply that for every $\tbf\in B'$ and $X\in F_\tbf$, we have
\[
\mu_\tbf\Bigl(B_{\R^3}(f_\tbf(X),\delta)\Bigr)\leq C'\delta_0^{-3\vare}\cdot \delta^\alpha \quad\text{for all $\delta\geq \delta_0$}.
\]
The proof is complete. 
\end{proof}

\subsection*{Proof of Theorem~\ref{thm: main finitary}}
We now turn to the proof of Theorem~\ref{thm: main finitary}. As it was done in the proof of Theorem~\ref{thm: main}, we will use the following observation: for all $s\in\R$ and $\tbf\in\R^{n-2}$, we have  
\be\label{eq: decomp pi t'}
\begin{aligned}
    \pi_{s\tbf}(r_1,\wbf, r_2)&=r_1+\wbf\cdot L(s\tbf)+r_2q(s\tbf)\\
    &=(1, s, s^2).f_\tbf(r_1,\wbf, r_2). 
\end{aligned}
\ee 

Apply Proposition~\ref{prop: ft preserves dim} with $\vare$ as in the statement of Theorem~\ref{thm: main finitary}. 
Let $B'\subset B$ be as in that proposition, and for every $\tbf\in B'$, let $F_{\tbf}$ be as in that proposition. 
Then we have 
\be\label{eq: lem ave trun egy proj use}
\mu_\tbf\Bigl(B_{\R^3}(f_\tbf(X),\delta)\Bigr)\leq C' \vare^{-A'}\delta_0^{-3\vare}\cdot\delta^\alpha\quad\text{for all $X\in F_\tbf$ and $\delta\geq\delta_0$}.
\ee

Let $K_\tbf= f_\tbf(F_\tbf)\subset \R^3$ and let $\rho_\tbf$ be the restriction of $\mu_t$ to $K_\tbf$ normalized to be a probability measure. Then~\eqref{eq: lem ave trun egy proj use} and the fact that $\mu(F\setminus F_{\tbf})\leq \delta_0^{\vare}$ imply that   
\be\label{eq: lem ave trun egy proj use'}
\rho_\tbf\Bigl(B_{\R^3}(Y,\delta)\Bigr)\leq 2C' \vare^{-A'}\delta_0^{-3\vare}\cdot\delta^\alpha\quad\text{for all $Y\in K_\tbf$ and $\delta\geq\delta_0$}.
\ee
This in particular implies that $K_\tbf$ and  $\rho_\tbf$ satisfy the conditions in~\cite[Thm.~B]{LM-PolyDensity}, see also \cite{PYZ} and \cite[Thm.\ 2.1]{GGW}. Apply~\cite[Thm.~B]{LM-PolyDensity} with $\vare$; thus, there there exists $J_{\delta,\tbf}\subset [0,2]$ with 
\[
\absolute{[0,2]\setminus J_{\delta,\tbf}}\leq \hat C \vare^{-D}\delta^{\vare}
\]
and for all $s\in J_{\delta,\tbf}$ there is a subset $K_{\delta, \tbf, s}\subset K_{\tbf}$ with 
\[
\rho_\tbf(K_{\tbf}\setminus K_{\delta, \tbf, s}) \leq \hat C\vare^{-D}\delta^{\vare}
\]
so that for all $Y\in K_{\delta, \tbf, s}$, we have 
\[
\rho_\tbf\Bigl(\{Y'\in K_\tbf: \absolute{(1,s,s^2)\cdot (Y-Y')}\leq \delta\}\Bigr)\leq \hat C \vare^{-D}\delta_0^{-3\vare} \cdot\delta^{\alpha- 7\vare};
\]

Let $A=\max\{A', D\}$.
In view of the definition of $\rho_\tbf$ and~\eqref{eq: decomp pi t'}, we have the following. For every $\tbf\in B'$ 
and $s\in J_{\delta,\tbf}$, put $F_{\delta, s\tbf}=F\cap f^{-1}_\tbf(K_{\delta, \tbf, s})$. Then 
\[
\#(F\setminus F_{\delta, s\tbf})\leq  10\hat C\vare^{-A}\delta^{\vare}\cdot (\#F),
\]
and for every $X\in F_{\delta, s\tbf}$, we have 
\be\label{eq: F delta st}
\#\{X'\in F_{\delta, s\tbf}: \absolute{\pi_{s\tbf}(X)-\pi_{s\tbf}(X')}\leq \delta\}\Bigr)\leq \hat C \vare^{-A}\delta_0^{-3\vare} 
\delta^{\alpha- 7\vare}. 
\ee

This finishes the proof. Indeed, let $B_\delta\subset B$ be the set of $\tbf\in  B$ 
for which there exists $F_{\delta, \tbf}\subset F$ with
\[
\#(F\setminus F_{\delta, \tbf})\leq 100\hat C\vare^{-D}\delta^{\vare}\cdot (\#F)
\] 
so that for all $X\in F_{\delta, \tbf}$ we have
\[
\#\{X'\in F_{\delta, \tbf}: \absolute{\pi_{\tbf}(X)-\pi_{\tbf}(X')}\leq \delta\}\Bigr)\leq \hat C \vare^{-A}\delta_0^{-10\vare} 
\delta^{\alpha}. 
\]
Then~\eqref{eq: F delta st} implies that for every $\tbf'\in B'$ and $s\in J_{\delta,\tbf'}$, we have $s\tbf'\in B_\delta$, so long as $s\tbf'\in B$. In particular, we conclude that 
\[
\absolute{B\setminus B_\delta}\ll \vare^{-A}\delta^{\vare}
\] 
as we aimed to prove.
\qed


\section{Theorem~\ref{thm: main finitary} and the isometry group of $\mathbb H^n$}\label{sec: SO(n,1)}

Let $n\geq 3$, and let 
\[
\sqf(x_1,x_1,\ldots, x_{n+1})=2x_1x_{n+1}-\sum_{i=1}^{n} x_i^2
\]
Then $G=\SO(\sqf)^\circ\simeq\SO(n,1)^\circ$ is the group of orientation preserving isometries of $\mathbb H^{n}$. Let 
\[
\Lie(G)=\{A\in\sl_{n+1}(\R): A^T\sqf+\sqf A=0\}
\]
where we identify $\sqf$ and the corresponding symmetric matrix, i.e., 
\[
\sqf= \begin{pmatrix} 0 & 0 & 1\\ 
0 & -I_{n-2} & 0\\ 
1 & 0 & 0 \end{pmatrix}.
\]

Let $H\subset G$ be the stabilizer of $e_{n}=(0,\ldots, 1, 0)$. Then 
\[
\Lie(G)=\Lie(H)\oplus \rfrak
\]
where $\rfrak$ is invariant under conjugation by $H$ and $\dim\rfrak=n$. 
More explicitly, 
\[
\rfrak=\Bigl\{X(r_1, \wbf, r_2): r_1,r_2\in \R, \wbf\in\R^{n-2}\Bigr\}
\]
where for $r_1, r_2\in\R$ and $\wbf \in \R^{n-2}$,
\be\label{eq: def X}
X(r_1, \wbf, r_2)=\begin{pmatrix} 0 & \begin{array}{cc} {\bf 0} & r_1 \end{array} & 0\\ 
\begin{array}{c}
    {\bf 0}^T  \\
     -r_2 
\end{array} & 
R(\wbf) & 
\begin{array}{c} {\bf 0}^T 
\\ -r_1 
\end{array}\\ 
0 & \begin{array}{c c}
    {\bf 0} & r_2 
\end{array} 
& 0 \end{pmatrix}\in\Mat_{n+1}(\R)
\ee
here ${\bf 0}\in\R^{n-2}$ and $R(\wbf)=\begin{pmatrix} 
{\bf 0}_{n-2} & \wbf^T \\ 
-\wbf &0 
\end{pmatrix}\in\Mat_{n-1}(\R)$. 

We may identify $\rfrak$ with $\R^{n}$ using the above coordinates. 
With this notation, put 
\[
\rfrak^+=\{X(r_1,0,0): r_1\in\R\}\simeq \R.
\]

Define the subgroup $U\subset H$ as follows. 
For every $\tbf\in\R^{n-2}$, let 
\[
u_{\tbf}=\begin{pmatrix} 1 & \begin{array}{cc} {\bf t} & 0 \end{array} & \tfrac12\norm{\bf t}^2\\ 
{\bf 0}^T & I_{n-2} & 
\begin{array}{c} {\bf t}^T 
\\ 0 
\end{array}\\ 
0 & {\bf 0} & 1 \end{pmatrix}\in\Mat_{n+1}(\R).
\]
where ${\bf 0}\in\R^{n-1}$. Put 
\[
U=\{u_\tbf: \tbf\in\R^{n-2}\}. 
\]

It is worth noting that if $a_s$ denotes the one parameter diagonal subgroup of $H$ defined by
\[
a_se_1=e^se_1,\quad a_s e_{n+1}=e^{-s}e_{n+1}, \quad a_se_i=e_i\quad 2\leq i\leq n,
\]
then 
\[
\begin{aligned}
U&=\{h\in H: \lim_{s\to-\infty}a_sha_{-s}=1\}\quad\text{and}\\
\rfrak^+&=\{X\in\rfrak: \lim_{s\to-\infty}a_sXa_{-s}=0\}.
\end{aligned}
\]

For every $\tbf\in\R^{n-2}$, define 
\[
\xi_{\tbf}:\rfrak\to\rfrak^+\quad\text{by}\quad\xi_{\tbf}(X)=(u_{\tbf} Xu_{-\tbf})^+. 
\]
where for $X\in\rfrak$, $X^+$ denote the orthogonal projection to $\rfrak^+$. 

\medskip

We have the following lemma. 

\begin{lemma}\label{lem: matrix multiplication}
Identify $\rfrak^+$ and $\R$ as above. Then   
   \[
\xi_{\tbf}(X(r_1,\wbf, r_2)) = \pi_{{\rm id}, q_{\rm st},\tbf}(r_1,\wbf, r_2)
\] 
where ${\rm id}:\R^{n-2}\to\R^{n-2}$ is the identity map and $q_{\rm st}(\tbf)=\frac12\norm{\tbf}^2$. 
\end{lemma}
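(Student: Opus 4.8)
The plan is to verify Lemma~\ref{lem: matrix multiplication} by a direct matrix computation, namely by conjugating the matrix $X(r_1,\wbf,r_2)$ in~\eqref{eq: def X} by $u_\tbf$ and $u_{-\tbf}$ and then reading off the entry that corresponds to $\rfrak^+$. First I would recall that $\rfrak^+=\{X(r_1,0,0):r_1\in\R\}$, so that for any $Y\in\rfrak$ the projection $Y^+$ is simply the coefficient of $r_1$ in the representation $Y=X(r_1,\wbf,r_2)$; concretely, under the embedding~\eqref{eq: def X} this coefficient appears in the $(1,n)$-entry of the matrix (the slot occupied by $r_1$). So the whole content of the lemma is: the $(1,n)$-entry of $u_\tbf\,X(r_1,\wbf,r_2)\,u_{-\tbf}$ equals $r_1+\wbf\cdot\tbf+\tfrac12 r_2\norm{\tbf}^2$, which is exactly $\pi_{\id,q_{\rm st},\tbf}(r_1,\wbf,r_2)$ with $L=\id$ and $q_{\rm st}(\tbf)=\tfrac12\norm{\tbf}^2$.

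Next I would organize the computation in two stages to keep it manageable. Write $u_\tbf = I + N(\tbf)$ where $N(\tbf)$ is the nilpotent matrix with $\tbf$ in the first row (columns $2,\dots,n-1$), $\tfrac12\norm{\tbf}^2$ in the $(1,n+1)$-entry, and $\tbf^T$ in the last column (rows $2,\dots,n-1$); note $u_{-\tbf}=u_\tbf^{-1}$ since $U$ is a group, and in fact $N(\tbf)^2$ has a single nonzero block so $u_{-\tbf}=I-N(\tbf)+\tfrac12 N(\tbf)^2$ — but it is cleaner just to use $u_{-\tbf}=u_{-\tbf}$ as given by the displayed formula with $\tbf$ replaced by $-\tbf$. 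Then compute $P:=u_\tbf X(r_1,\wbf,r_2)$ first, and afterwards $P\,u_{-\tbf}$, each time tracking only the first row, since only the first row is needed to extract the $(1,n)$-entry of the product. Because the first row of $u_\tbf$ is $(1,\tbf,0,\tfrac12\norm{\tbf}^2)$ (in block form $1\mid \tbf\mid 0\mid \tfrac12\norm\tbf^2$, using the coordinate split $1+(n-2)+1+1=n+1$), the first row of $P$ is obtained by left-multiplying $X$ by this row vector, and similarly the $(1,n)$-entry of $P u_{-\tbf}$ is the dot product of the first row of $P$ with the $n$-th column of $u_{-\tbf}$. Carrying this out, the $r_1$ term comes from the diagonal passage through $X$, the $\wbf\cdot\tbf$ term comes from the interaction of the $\tbf$-block of $u_\tbf$ with the $R(\wbf)$-block of $X$, and the $\tfrac12 r_2\norm\tbf^2$ term comes from the $\tfrac12\norm\tbf^2$ entry of $u_\tbf$ meeting the $r_2$ entries of $X$; I would present this as a short display rather than a full $(n+1)\times(n+1)$ multiplication.

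I do not expect a genuine obstacle here — the statement is a bookkeeping identity — but the one place to be careful is the index conventions: making sure that $e_n=(0,\dots,1,0)$ sits in position $n$ (not $n+1$), that $H=\stab(e_n)$ is compatible with the block form chosen for $\sqf$, and that the slot in~\eqref{eq: def X} labeled $r_1$ is indeed the one whose conjugation-limit under $a_s$ as $s\to-\infty$ is the whole of $\rfrak^+$, as asserted just above the lemma. A secondary point is to confirm that the orthogonal projection $X\mapsto X^+$ onto $\rfrak^+$ really is ``extract the $r_1$-coordinate'': this holds because the coordinate vector fields $\partial_{r_1},\partial_\wbf,\partial_{r_2}$ are mutually orthogonal for the chosen identification of $\rfrak$ with $\R^n$ (one may take this identification itself as the definition of the inner product on $\rfrak$, in which case it is immediate). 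Once these conventions are pinned down, the claimed formula $\xi_\tbf(X(r_1,\wbf,r_2))=r_1+\wbf\cdot\tbf+\tfrac12 r_2\norm\tbf^2=\pi_{\id,q_{\rm st},\tbf}(r_1,\wbf,r_2)$ drops out of the first-row computation, completing the proof.
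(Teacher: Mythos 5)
Your proposal is correct and matches the paper's own proof essentially verbatim: the paper also computes $u_\tbf X(r_1,\wbf,r_2)$ first, tracks only the top row (whose middle block becomes $\tilde r_1+\tilde\tbf R(\wbf)+\tfrac12\norm{\tbf}^2\tilde r_2$), observes that right-multiplication by $u_{-\tbf}$ leaves that block unchanged, and reads off $r_1+\tbf\cdot\wbf+\tfrac12\norm{\tbf}^2 r_2$ from the $r_1$-slot. Your extra care about the index conventions and about the projection onto $\rfrak^+$ being coordinate extraction is sensible but does not change the argument.
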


\begin{proof}
    The proof is based on a direct computation as we now explicate. 
    
    To simplify the notation slightly, we will write $\tilde\tbf=(\tbf, 0)$ and $\tilde r_i=({\bf 0}, r_i)$. Note that $\tilde\tbf\cdot\tilde r_i=0$. 
    Recall also that 
    \[
   \xi_\tbf(X)=(u_\tbf X(r_1, r_2, \wbf) u_{-\tbf})^+
    \]
    We have  
    \begin{multline*}
   u_\tbf X(r_1, r_2, \wbf) =\begin{pmatrix} 1 &  
    \tilde\tbf  & 
    \tfrac12\norm{\bf t}^2\\ 
    {\bf 0}^T & I_{n-2} & \tilde{\bf t}^T \\ 
0 & {\bf 0} & 1 \end{pmatrix} 
\begin{pmatrix} 0 & \tilde r_1 & 0\\ 
-\tilde{r}_2^T  & 
R(\wbf) & 
-\tilde{r}_1\\ 
0 & \tilde r_2 
& 0 \end{pmatrix}=\\ 
\begin{pmatrix} 0 & \tilde r_1+\tilde\tbf R(\wbf)+\tfrac12\norm{\tbf}^2\tilde r_2 & 0\\ 
-\tilde{r}_2^T  & 
R(\wbf)+\tilde\tbf^T\tilde r_2 & 
-\tilde{r}_1\\ 
0 & \tilde r_2 
& 0 \end{pmatrix}
    \end{multline*}
    Multiplying this with $u_{-\tbf}$, we have 
    \begin{multline*}
      u_\tbf X(r_1, r_2, \wbf)u_{-\tbf}= \\
      \begin{pmatrix} 0 & \tilde r_1+\tilde\tbf R(\wbf)+\tfrac12\norm{\tbf}^2\tilde r_2 & 0\\ 
-\tilde{r}_2^T  & 
R(\wbf)+\tilde\tbf^T\tilde r_2 & 
-\tilde{r}_1\\ 
0 & \tilde r_2 
& 0 \end{pmatrix} 
\begin{pmatrix} 1 &  
    -\tilde\tbf  & 
    \tfrac12\norm{\bf t}^2\\ 
    {\bf 0}^T & I_{n-2} & -\tilde{\bf t}^T \\ 
0 & {\bf 0} & 1 \end{pmatrix} =\\
\begin{pmatrix} 0 & \tilde r_1+\tilde\tbf R(\wbf)+\tfrac12\norm{\tbf}^2\tilde r_2 & *\\ 
*  & * & *\\ 
* & * & * \end{pmatrix}
    \end{multline*}
   Since $\tilde\tbf R(\wbf)=({\bf 0}, \tbf\cdot\wbf)$, the above and the definitions of $\xi_\tbf$ and $\pi_\tbf$ imply 
    \[
    \xi_\tbf(X)=r_1+\tbf\cdot\wbf+\tfrac12\norm{\tbf}^2 r_2=\pi_{{\rm id}, q_{\rm st},\tbf} (r_1,\wbf, r_2)
    \]
    as we claimed. 
\end{proof}

In view of Lemma~\ref{lem: matrix multiplication}, the following theorem is a restatement of Theorem~\ref{thm: main finitary} in the language of adjoint action of $U$ on $\rfrak$.

\begin{thm}\label{thm: SO(n,1)}
    Let $0<\alpha\leq 1$, and let  $0<\delta_0\leq1$. 
Let $F\subset B_{\rfrak}(0,1)$ be a finite set satisfying the following 
\[
\#(B_{\rfrak}(X, \delta)\cap F)\leq C\delta^\alpha\cdot (\# F)\qquad\text{for all $X\in F$ and all $\delta\geq \delta_0$}
\]
where $C\geq 1$.

Let $0<\vare<\alpha/100$.
For every $\delta\geq \delta_0$, there exists a subset $B_{\delta}\subset B:=\{\tbf\in\R^{n-2}: 1\leq \norm{\tbf}\leq 2\}$ with 
\[
|B\setminus B_{\delta}|\ll \vare^{-A}\delta^{\vare}
\]
so that the following holds. 
Let ${\bf t}\in B_{\delta}$, there exists $F_{\delta,{\bf t}}\subset F$ with 
\[
\#(F\setminus F_{\delta,{\bf t}})\ll \vare^{-A}\delta^{\vare}\cdot (\# F)
\]
such that for all $X\in F_{\delta,{\bf t}}$, we have 
\[
\#\Bigl(\{X'\in F_{\delta, \tbf}: |\xi_{\bf t}(X')-\xi_{\bf t}(X)|\leq \delta\}\Bigr)\leq C \delta_0^{-10}\cdot \delta^{\alpha}\cdot (\# F)
\] 
where $A$ and the implied constants are absolute. 
\end{thm}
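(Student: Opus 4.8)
The plan is to observe that Theorem~\ref{thm: SO(n,1)} is literally Theorem~\ref{thm: main finitary} after a change of dictionary, so the whole work is in matching notation rather than in any new analysis. Concretely, Lemma~\ref{lem: matrix multiplication} shows that under the identification of $\rfrak$ with $\R^n$ via the coordinates $X(r_1,\wbf,r_2)\leftrightarrow(r_1,\wbf,r_2)$, and of $\rfrak^+$ with $\R$, the map $\xi_\tbf$ coincides exactly with $\pi_{\mathrm{id},q_{\mathrm{st}},\tbf}$, where $q_{\mathrm{st}}(\tbf)=\tfrac12\norm{\tbf}^2$ is positive definite and $\mathrm{id}:\R^{n-2}\to\R^{n-2}$ is an isomorphism. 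Thus the hypotheses of Theorem~\ref{thm: main finitary} are satisfied with $L=\mathrm{id}$ and $q=q_{\mathrm{st}}$.

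First I would note that the norm on $\rfrak$ implicit in the statement (coming from, say, the operator norm or any fixed Euclidean structure on $\Mat_{n+1}(\R)$ restricted to $\rfrak$) is comparable, with absolute constants, to the standard Euclidean norm on $\R^n$ under the coordinate identification $X(r_1,\wbf,r_2)\leftrightarrow(r_1,\wbf,r_2)$; indeed $R(\wbf)$ has entries $\pm\wbf$ and the $r_i$ appear as single entries, so $\norm{X(r_1,\wbf,r_2)}\asymp\sqrt{r_1^2+\norm{\wbf}^2+r_2^2}$. Consequently a ball $B_\rfrak(X,\delta)$ corresponds, up to adjusting $\delta$ by an absolute factor, to a Euclidean ball $B_{\R^n}(X,\delta)$, so the Frostman-type hypothesis $\#(B_\rfrak(X,\delta)\cap F)\leq C\delta^\alpha(\#F)$ transfers to the hypothesis of Theorem~\ref{thm: main finitary} (after absorbing the absolute factor into $C$, which only affects the conclusion through the harmless $\delta_0^{-10}$ and $\ll$ constants).

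Next I would simply apply Theorem~\ref{thm: main finitary} to the set $F\subset B_{\R^n}(0,1)$ with $L=\mathrm{id}$, $q=q_{\mathrm{st}}$, the same $\alpha$, $\delta_0$, $\vare$. This produces, for each $\delta\geq\delta_0$, the set $B_\delta\subset B$ with $|B\setminus B_\delta|\ll\vare^{-A}\delta^\vare$ and, for each $\tbf\in B_\delta$, the set $F_{\delta,\tbf}\subset F$ with $\#(F\setminus F_{\delta,\tbf})\ll\vare^{-A}\delta^\vare(\#F)$ such that for all $X\in F_{\delta,\tbf}$,
\[
\#\bigl(\{X'\in F_{\delta,\tbf}:|\pi_\tbf(X')-\pi_\tbf(X)|\leq\delta\}\bigr)\ll C\delta_0^{-10}\delta^\alpha(\#F).
\]
By Lemma~\ref{lem: matrix multiplication}, $\pi_\tbf=\pi_{\mathrm{id},q_{\mathrm{st}},\tbf}=\xi_\tbf$ under the identification, so replacing $\pi_\tbf$ by $\xi_\tbf$ verbatim gives exactly the asserted conclusion of Theorem~\ref{thm: SO(n,1)}. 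The only point to check is that the implied constants in Theorem~\ref{thm: main finitary}, which are allowed to depend on $L$ and $q$, become absolute here because $L=\mathrm{id}$ and $q=q_{\mathrm{st}}$ are fixed universal objects; likewise $A$ is already absolute.

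I do not expect any genuine obstacle: the content is entirely in Theorem~\ref{thm: main finitary} and Lemma~\ref{lem: matrix multiplication}, both already available. The one mild bookkeeping point — the mild obstacle, if any — is making sure the metric on $\rfrak$ used to state the Frostman hypothesis and the conclusion is the one for which the identification $\rfrak\cong\R^n$ is bi-Lipschitz with absolute constants; once that comparison is recorded, the proof is a one-line citation of Theorem~\ref{thm: main finitary} together with Lemma~\ref{lem: matrix multiplication}.
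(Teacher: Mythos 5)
Your proposal is correct and is exactly the paper's route: the paper introduces Theorem~\ref{thm: SO(n,1)} as a restatement of Theorem~\ref{thm: main finitary} via Lemma~\ref{lem: matrix multiplication}, with $L=\mathrm{id}$ and $q=q_{\rm st}$. Your additional remark that the identification $\rfrak\cong\R^n$ is bi-Lipschitz with absolute constants is a sensible bookkeeping point that the paper leaves implicit.
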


\bibliographystyle{halpha}
\bibliography{papers}

\end{document}